%
%
%
%


\documentclass[paper=a4, fontsize=11pt]{scrartcl} 

\usepackage[T1]{fontenc} 
\usepackage{fourier} 
\usepackage[english]{babel} 
\usepackage{amsmath,amsfonts,amsthm} 
\usepackage{amsmath, calligra, mathrsfs}
\usepackage{amssymb}
\usepackage[mathscr]{euscript}
\usepackage{verbatim}
\usepackage{xcolor}
\usepackage{mdframed} 
\usepackage{soulutf8}
\usepackage{stmaryrd}
\usepackage{tikz-cd}
\usepackage{hyperref}
\usepackage{lipsum} 

\usepackage{sectsty} 
\allsectionsfont{\centering \normalfont\scshape} 

\usepackage{fancyhdr} 
\usepackage{xurl}
\pagestyle{fancyplain} 
\fancyhead{} 
\fancyfoot[L]{} 
\fancyfoot[C]{} 
\fancyfoot[R]{\thepage} 
\newcommand*{\sheafhom}{\mathcal{H}\kern -.5pt om}
\setlength{\headheight}{13.6pt} 

\numberwithin{equation}{section} 
\numberwithin{figure}{section} 
\numberwithin{table}{section} 

\newtheorem{thm}{Theorem}[section]
\newtheorem{cor}[thm]{Corollary}
\newtheorem{prop}[thm]{Proposition}

\theoremstyle{definition}
\newtheorem{defn}[thm]{Definition}

\newtheorem{exmp}[thm]{Example}

\theoremstyle{remark}
\newtheorem{rem}[thm]{Remark}

\DeclareMathOperator{\Var}{Var}

\DeclareMathOperator{\lk}{lk}

\DeclareMathOperator{\supp}{supp}

\DeclareMathOperator{\lcm}{lcm}

\DeclareMathOperator{\Conf}{Conf}

\DeclareMathOperator{\rk}{rank}

\setlength\parindent{0pt} 


\newcommand{\horrule}[1]{\rule{\linewidth}{#1}} 

\title{	
	\normalfont \normalsize 
	\textsc{} \\ [25pt] 
	\horrule{0.5pt} \\[0.4cm] 
	\huge Simplicial chromatic polynomials as Hilbert series of Stanley--Reisner rings 

	\horrule{2pt} \\[0.5cm] 
}

\author{Soohyun Park} 

\date{\normalsize September 16, 2022} 

\begin{document}
	
	\maketitle 
	
	\begin{abstract}
		\noindent We find families of simplicial complexes where the simplicial chromatic polynomials defined by Cooper--de Silva--Sazdanovic \cite{CdSS} are Hilbert series of Stanley--Reisner rings of auxiliary simplicial complexes. As a result, such generalized chromatic polynomials are determined by $h$-vectors of auxiliary simplicial complexes. In addition to generalizing related results on graphs and matroids, the simplicial complexes used allow us to consider problems that are not necessarily analogues of those considered for graphs. Some examples include supports of cyclotomic polynomials and unimodality properties of polynomials involving $h$-vectorsl. \\
		
		\noindent If the $h$-vectors involed have sufficiently large entries, the Hilbert series are Hilbert polynomials of some $k$-algebra. As a consequence of connections between $h$-vectors and simplicial chromatic polynomials, we also find simplicial complexes whose $h$-vectors are determined by addition-contraction relations of simplicial complexes analogous to deletion-contraction relations of graphs. The constructions used involve generalizations of relations in Euler characteristics of configuration spaces and chromatic polynomials of graphs.
	\end{abstract}

	\section{Introduction}
	The main objective of this paper is to show that Euler characteristics of certain generalized configuration spaces are Hilbert series of Stanley--Reisner rings of associated simplicial complexes. The latter interpretation implies that these Euler characteristics are determined by $h$-vectors of some auxiliary simplicial complexes. In the course of doing this, we find that these Euler characteristics specialize to invariants satisfying the following properties:
	
	\begin{itemize}
		\item Polynomials with constant terms determining whether a cyclotomic polynomial of degree $n = p_1 \cdots p_d$ ($p_i$ distinct primes) has a nonzero term of degree $j$ for $0 \le j \le \varphi(n)$ (Corollary \ref{cyclcheck})

		\item Unimodality of polynomials involving $h$-vectors of simplicial complexes (Corollary \ref{unimodsimpchrom})

	\end{itemize}
	
	Each problem listed above is associated with an appropriate choice of simplicial complexes. \\
	
	Let $X$ be a manifold and $\Conf^n X = \{ (x_1, \ldots, x_n) \in X^n : x_i \ne x_j \text{ for all } i \ne j \}$. The configuration spaces we study generalize a relation between Euler characteristics of compactications of $\Conf^n X$ and chromatic polynomials of graphs. More specifically, our starting point is a combinatorial interpretation of the (compactly supported) Euler characteristic of the ordered configuration space $\Conf^n X$ and proper colorings of the complete graph with $n$ vertices $K_n$. 
	
	\begin{equation} \label{ordconfchi}
		\chi_c(\Conf^n X) = \chi_c(X)(\chi_c(X) - 1) \cdots (\chi_c(X) - (n - 1)).
	\end{equation}
	
	 Comparing this expression to the chromatic polynomial 
	 
	 \begin{equation} \label{cpletechrom}
	 	p_{K_n}(\lambda) = \lambda(\lambda - 1) \cdots (\lambda - (n - 1))
	 \end{equation}
	 
	 of the complete graph $K_n$ for $\lambda$ available colors, we find that $\chi_c(\Conf^n X) = p_{K_n}(\chi_c(X))$. \\
	
	We can consider how to generalize the relation $\chi_c(\Conf^n X) = p_{K_n}(\chi_c(X))$ to chromatic polynomials of arbitrary graphs $G$. Since adding edges to a graph $G$ introduces additional restrictions to proper colorings of $G$, it is natural to expect that an associated configuration space would be a partial compactification of $\Conf^n X$. In this context, we would allow the same points of $X$ to occupy slots corresponding to non-adjacent vertices. Indeed, such a generalization was found by Eastwood--Huggett (Theorem 2 on p. 155 of \cite{EaH}) for a modified configuration space parametrizing such configurations. \\
	
	Given a graph $G$ with vertices $V = \{ v_1, \ldots, v_n \}$, they consider Euler characteristics of the configuration space \[ M_G = M^n \setminus \bigcup_{e \in E} \Delta_e, \]
	
	for $M = \mathbb{CP}^{\lambda - 1}$, where \[ \Delta_e = \{ (x_1, \ldots, x_n) \in M^n : x_i = x_j \} \] for each edge $e = v_i v_j$ of $G$. The authors then show that 
	
	\begin{equation} \label{graphgen}
		p_G(\lambda) = \chi_c((\mathbb{CP}^{\lambda - 1})_G) \text{ (Theorem 2 on p. 1155 of \cite{EaH}),}
	\end{equation}
	
	which generalizes the correspondence between \ref{ordconfchi} and \ref{cpletechrom}. \\
	
	The construction of Eastwood--Huggett \cite{EaH} was further generalized with simplicial complexes replacing graphs in recent work of Cooper--de Silva--Sazdanovic \cite{CdSS}. They define the simplicial chromatic polynomial (Definition 6.1 on p. 738 of \cite{CdSS}, Definition \ref{simpchromdef}), which is the compactly supported Euler characteristic of a certain configuration space \ref{simpchromdef} which is a higher dimensional version of $M_G$ for simplicial complexes. 
	
	\begin{defn}  (Definition 2.1 on p. 725 and p. 738 of \cite{CdSS}) \\
		Let $S$ be a simplicial complex whose $0$-skeleton is given by the vertex set $V = V(S) = \{ v_1, \ldots, v_n \}$. Let $M$ be a topological space. For each simplex $\sigma = [ v_{i_1} \cdots v_{i_k} ]$, define the diagonal corresponding to $\sigma$ to be \[ D_\sigma = \{ (x_1, \ldots, x_n) \in M^n : x_{i_1} = \cdots  = x_{i_k} \}. \]
		
		We define the \textbf{simplicial configuration space} as 
		
		\begin{equation}  \label{simpconfdef}
			M_S = M^n \setminus \bigcup_{\sigma \in \Delta^V \setminus S} D_\sigma
		\end{equation}
		
		where $\Delta^V$ is the simplicial complex containing all subsets of the vertices $v_i$ (analogous to a simplex generated by independent vectors corresponding to the $v_i$) and $\Delta^V \setminus S$ denotes tuples of vertices in $V$ which do \emph{not} occur as simplices in $S$. 
	\end{defn}
	
	The simplicial chromatic polynomial $\chi_c(S)(t)$ associated to a simplicial complex $S$ is the compactly supported Euler characteristic of $M_S$ with $M = \mathbb{CP}^{t - 1}$ (Definition \ref{simpchromdef}). This polynomial is characterized (up to normalization) by an analogue of the deletion-contraction property for chromatic polynomials (Proposition \ref{adddelsimpchrom}, Corollary 6.1 and Proposition 6.4 on p. 738 of \cite{CdSS}). We would like to explore the combinatorial side of the simplicial chromatic polynomial. \\
	
	In Section 7 on p. 740 -- 742 of \cite{CdSS}, the authors show that the simplicial chromatic polynomial differs from a number of known polynomial invariants of graphs. They state that it is not a specialization of known polynomial invariants of graphs or simplicial complexes. However, we will show that $\chi_c(S)$ can often be built out of invariants of an auxiliary simplicial complex $T(S)$. The main tool we use to do this is the Stanley--Reisner ring (Definition \ref{srringdef}). 
	
	\begin{defn} (Stanley--Reisner ring, p. 53 -- 54 of \cite{Sta3}) \\
		Let $k$ be a field and $S$ be a simplicial complex with vertex set $V = \{ 1, \ldots, n \}$. We will call the subsets of $V$ belonging to $S$ \textbf{faces} and those that do \emph{not} belong to $S$ \textbf{nonfaces}. \\
		
		For a subset $A$ of the vertex set $V$, write $x_A = \prod_{i \in A} x_i \in k[x_1, \ldots, x_n]$. Let $I(A)$ denote the ideal in $k[x_1, \ldots, x_n]$ generated by all monomials $x_\sigma$ such that $\sigma \notin S$ (i.e. $\sigma \in \Delta^V \setminus S$ in the notation above). Note that $I(S)$ is generated by the \emph{minimal} nonfaces of $S$ since $\sigma$ being a nonface and $\alpha \supset \sigma$ implies that $\alpha$ is also a nonface of $S$. The quotient ring $k[S] := k[V]/I(S)$ is called the \textbf{Stanley--Reisner ring (or face ring)} of $S$. 
	\end{defn}
	
	The Stanley--Reisner ring $k[S]$ is the natural setting for many combinatorial problems (e.g. Stanley's proof of the upper bound theorem for simplicial spheres in Section 5.4 on p. 237 -- 240 of \cite{BH}). An overview of algebraic properties of this ring and their applications is given in a survey of Franscisco--Mermin--Schweig \cite{FMS}. Some examples are also given on p. 7 -- 8 of \cite{MS}. \\

	Our main result constructs families of simplicial complexes $S$ such that $\chi_c(S)$ can be expressed as a specialization of the Hilbert series of the Stanley--Reisner ring associated an auxiliary simplicial complex $T(S)$ (Theorem \ref{coverchromhilb}). \\
	
	The families of simplicial complexes that will consider satisfy certain intersection properties. We list the properties below and give examples of simplicial complexes satisfying them.
	
	\begin{defn} \label{propidef}
		Let $S$ be a simplicial complex with minimal nonfaces $\sigma_1, \ldots, \sigma_r$. A simplicial complex $S$ satisfies \textbf{property $\mathbf{I}$} if there is a collection of finite sets $\alpha_i$ such that $|\alpha_i| = |\sigma_i| - 1$ for each $1 \le i \le r$ and $\alpha_I \cap \alpha_p = \sigma_I \cap \sigma_p = \emptyset$ if $\sigma_I \cap \sigma_p = \emptyset$ and $|\alpha_I \cap \alpha_p| = |\sigma_I \cap \sigma_p| - 1$ if $|I| \ge 2$ and $\sigma_I \cap \sigma_p \ne \emptyset$ for each subset $I \subset [r]$ and $p \notin I$.
	\end{defn}
	
	Strictly speaking, the conditions required for the statement of Theorem \ref{coverchromhilb} to be satisfied come from intersection graphs of minimal nonfaces of simplicial complexes, which are defined in Definition \ref{conncpt} and used in the proof of Theorem \ref{coverchromhilb}. However, we phrase things in terms of the property $I$ condition since it makes it easier to understand what kinds of simplicial complexes satsify the conditions of Theorem \ref{coverchromhilb}.
	
	\begin{exmp}
		Here are two examples where property $I$ is satisfied. 
		
		\begin{itemize}
			\item The minimal nonfaces of $S$ are disjoint from each other by a simplicial complex $S$.
			
			\item There is a point $a \in V$ such that $\sigma_i \cap \sigma_j = \{ a \}$ for each $i, j$.
		\end{itemize}
		The general idea is that the minimal nonfaces either intersect at a small number of points or each $\sigma_i$ contains many points which are not contained in any $\sigma_j$ for $j \ne i$. Some specific simplicial complexes satisfying property $I$ are given in Example \ref{extexmp}.
	
	\end{exmp}

	\begin{thm} \label{coverchromhilb}
		Suppose that $S$ satisfies property $I$. Then, the simplicial chromatic polynomial $\chi_c(S, t)$ of $S$ is a normalization (in the sense of Proposition \ref{simpchromsrintcase}) of the Hilbert--Poincar\'e series of an auxiliary simplicial complex $T = T(S)$ which is determined by its $h$. Let $h_{T(S)}$ be the generating function of the $h$-vector of $T(S)$. This is the polynomial where the coefficient of $t^i$ is $h_i$. \\ 
		
		Tracing through the definitions, we have that \[ \chi_c(S)(t) = t^d(t - 1)^{n - d} h_{T(S)}(t^{-1}). \] Equivalently, we have that \[ \frac{\chi_c(S)(t)}{t^{d - m} (t - 1)^{n - d} } = t^m h_{T(S)}(t^{-1}), \] where $m = \dim T(S)$. This connects the simplicial chromatic polynomial to the reciprocal polynomial of $h_{T(S)}(t)$. Note that \emph{any} simplicial complex can have the same minimal nonfaces as the auxiliary simplicial complex $T(S)$ for some smiplicial complex $S$ satisfying property $I$.
	\end{thm}

	\begin{rem}
		
		Proposition \ref{simpchromsrintcase} gives an alternate set of conditions on the minimal nonfaces involving the number of connected components of a graph determined by the minimal nonfaces. For example, these conditions are satisfied by simplicial complexes where any pair of minimal nonfaces has a nonempty intersection. \\
	\end{rem}
	
 	In this setting of Theorem \ref{coverchromhilb}, the simplicial chromatic polynomial is entirely determined by the $h$-vector of $S$ (Corollary \ref{simpchromdet}). We also note that ``normalized'' version (without taking the reciprocal polynomial) of the simplicial chromatic polynomial is a specialization of Hilbert series of the canonical module of the Stanley--Reisner ring. \\

	Under suitable assumptions, there is a natural relation between simplicial chromatic polynomials and Hilbert \emph{polynomials} of other rings (Corollary \ref{simpchromhilbpol}) when the $h_i$ are sufficiently large.
	
	\begin{defn} \label{compdef}  
		Given a $k$-element subset $I = \{ \sigma_1, \ldots, \sigma_k \} \subset \Delta^V \setminus S$, let $G_I$ be the graph whose vertices are the $\sigma_i$ with two vertices corresponding to $\sigma_i, \sigma_j$ being connected by an edge if and only if $\sigma_i \cap \sigma_j \ne \emptyset$. Let $c(I)$ be the number of connected components of $G_I$. \\
	\end{defn}
	
	\begin{cor} \label{simpchromhilbpol}
	Let $S$ be a simplicial complex and $\sigma_1, \ldots, \sigma_r$ be the minimal nonfaces of $S$. Suppose that $c(I) = a$ for all subsets $I \subset [r]$ with $|I| \ge 2$ as in Proposition \ref{simpchromsrintcase}. If $h_{a + r} \ge 1$, $h_{a + 1}, h_{a + 2} \ge 3$, and $h_i \ge 1$ for all $i \ge a$, then \[ t^{-n} - \chi_c(S)(t^{-1}) + (t^{n + 1} - t^{n + a}) \sum_{\sigma_i} t^{-|\sigma_i|} = t^{-n} P(t^{-1}) \] for the Hilbert polynomial $P = P(x)$ of some $k$-algebra. \\
	
	If $a = 1$, this specializes to \[ t^{-n} - \chi_c(S)(t^{-1}) = t^{-n} P(t^{-1}). \]
	\end{cor} 
	
	These relations are analogous to some results on chromatic polynomials and Hilbert polynomials in the literature (e.g. Theorem 13 on p. 79 of \cite{Ste}, Proposition 3.3 on p. 9 of \cite{Bay}) although specializations of our results to these settings seem to yield different simplicial complexes. \\
	
	Finally, we show that there are some natural connections between simplicial chromatic polynomials and certain simplicial complexes whose structure depends on the coefficients of cyclotomic polynomials in Section \ref{applications}. The applications involved include supports of coefficients of cyclotomic polynomials (Corollary \ref{cyclcheck}) and unimodality of $h$-vectors (Corollary \ref{unimodsimpchrom}).
		
	\section*{Acknowledgements}
	I am very grateful to my advisor Benson Farb for his guidance and encouragement throughout the project. Also, I would like to thank him for extensive comments on preliminary versions of this paper. Finally, I am very thankful to Karim Adiprasito for helpful discussions on a project motivating the unimodality example studied here.

	\section{Simplicial chromatic polynomials and Hilbert series of Stanley--Reisner rings}
	
	In this section, we express simplicial chromatic polynomials as Hilbert series (Theorem \ref{coverchromhilb}, Proposition \ref{simpchromsrintcase}) and Hilbert polynomials (Corollary \ref{simpchromhilbpol}) of auxiliary simplicial complexes. \\
	
	\subsection{Transformation from Hilbert series of (auxiliary) simplicial complexes and $h$-vectors}
		
	Before showing the main reinterpretation of the simplicial chromatic polynomial, we first go over some basic definitions used on both sides of the correspondence. \\
	
	\begin{defn}  (Definition 2.1 on p. 725 and p. 738 of \cite{CdSS}) \\
		Let $S$ be a simplicial complex whose $0$-skeleton is given by the vertex set $V = V(S) = \{ v_1, \ldots, v_n \}$. Let $M$ be a topological space. For each simplex $\sigma = [ v_{i_1} \cdots v_{i_k} ]$, define the diagonal corresponding to $\sigma$ to be \[ D_\sigma = \{ (x_1, \ldots, x_n) \in M^n : x_{i_1} = \cdots  = x_{i_k} \}. \]
		
		We define the \textbf{simplicial configuration space} as 
		
		\begin{equation}
			 M_S = M^n \setminus \bigcup_{\sigma \in \Delta^V \setminus S} D_\sigma \label{simpconfdefuni}
		\end{equation}
		
		where $\Delta^V$ is the simplicial complex containing all subsets of the vertices $v_i$ (analogous to a simplex generated by independent vectors corresponding to the $v_i$) and $\Delta^V \setminus S$ denotes tuples of vertices in $V$ which do \emph{not} occur as simplices in $S$. 
	\end{defn}
	
	\begin{defn} \label{simpchromdef} (Definition 6.1 on p. 738 of \cite{CdSS}) \\
		Let $S$ be a simplicial complex and let $M$ be a manifold. Given $S$ and $M$, let \[ \chi_c(S, M) := \sum (-1)^k \rk H_c^k(M_S). \]
		
		The \textbf{simplicial chromatic polynomial} of a simplicial complex $S$ is the polynomial defined by the assignment $\chi_c(S) : t \mapsto \chi_c(S, \mathbb{CP}^{t - 1})$.
	\end{defn}
	
	Note that the union \ref{simpconfdefuni} is determined by the \emph{minimal} nonfaces $\sigma \in \Delta^V \setminus S$ since $\tau \subset \sigma \Rightarrow D_\tau \supset D_\sigma$. Under an appropriate normalization, the simplicial chromatic polynomial is uniquely defined by the following addition-contraction relation involving the minimal nonfaces: \\

	\begin{prop} \label{adddelsimpchrom} (Proposition 6.4 on p. 738, Definition 2.2 on p. 727 of \cite{CdSS}) \\
		Let $\sigma$ be a minimal nonface of a simplicial complex $S$, and 
		let $S_{/\sigma}$ be the tidied contraction, which removes every element sharing a vertex with $\sigma$ from $S$. \\
		
		The normalization $\chi_c(\Delta^{t - 1}, t) = t^n$ and the addition-contraction formula \[ \chi_c(S, t) - \chi_c(S \cup \{ \sigma \}, t) + \chi_c(S_{/\sigma}, t) = 0 \] determine a unique polynomial invariant of simplicial complexes.  \\     
	\end{prop}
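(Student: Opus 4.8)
The plan is to prove this as a uniqueness statement by well-founded induction, treating the normalization and the addition-contraction relation as a recursive recipe that leaves no freedom in the value of $\chi_c(S,t)$. Existence is not in question here: the topologically defined invariant of Definition \ref{simpchromdef} is precisely the object under discussion, and \cite{CdSS} verifies that it satisfies both the normalization $\chi_c(\Delta^V,t)=t^n$ and the relation $\chi_c(S,t)-\chi_c(S\cup\{\sigma\},t)+\chi_c(S_{/\sigma},t)=0$. So the content to establish is that any polynomial invariant obeying both conditions is forced; equivalently, if $P$ and $Q$ each satisfy the two conditions, then $P=Q$ on every simplicial complex.

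The crux is to choose a complexity statistic that strictly decreases under \emph{both} operations on the right-hand side of the relation. I would take $N(S)$ to be the number of nonfaces of $S$, that is, the number of subsets $A\subseteq V(S)$ with $A\notin S$. For the addition term, since $\sigma$ is a minimal nonface every proper subset of $\sigma$ already lies in $S$, so $S\cup\{\sigma\}$ is again a simplicial complex obtained by converting the single subset $\sigma$ from a nonface into a face; hence $N(S\cup\{\sigma\})=N(S)-1$. For the contraction term, the tidied contraction $S_{/\sigma}$ lives on the smaller vertex set $V\setminus\sigma$ and its faces are exactly the faces of $S$ disjoint from $\sigma$, so its nonfaces are precisely the nonfaces of $S$ contained in $V\setminus\sigma$; since $\sigma$ itself is a nonface of $S$ excluded from this list, $N(S_{/\sigma})<N(S)$. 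Thus both $S\cup\{\sigma\}$ and $S_{/\sigma}$ have strictly smaller $N$-value than $S$, and $N$ takes values in the well-ordered set of nonnegative integers.

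With this in place the induction is immediate. The base case $N(S)=0$ means $S$ has no nonfaces, i.e. $S$ is the full simplex $\Delta^V$, where the normalization forces $P(S,t)=t^n=Q(S,t)$. For the inductive step, when $N(S)\ge 1$ there is at least one nonface and hence a minimal nonface $\sigma$; the relation gives $P(S,t)=P(S\cup\{\sigma\},t)-P(S_{/\sigma},t)$ and likewise for $Q$, and by the inductive hypothesis applied to the two complexes of strictly smaller $N$-value the right-hand sides agree, so $P(S,t)=Q(S,t)$. Since a minimal nonface is available whenever $N(S)\ge 1$, the recursion never stalls, so the two conditions assign a value to every $S$; combined with the existence of the topological $\chi_c$ this yields exactly one invariant, as claimed.

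The one point requiring care — and the only place a naive argument breaks — is the choice of statistic. The seemingly natural candidate, the number of \emph{minimal} nonfaces, does not work: adding a minimal nonface $\sigma$ can create a brand-new minimal nonface, namely a superset of $\sigma$ all of whose other proper subsets were already faces, so the count of minimal nonfaces need not drop. Tracking the total number of nonfaces instead sidesteps this, since each operation provably lowers it. This is the main obstacle to get right; once the measure is fixed the induction is routine. One could equally well induct lexicographically on $(|V(S)|,N(S))$, but the single statistic $N(S)$ already suffices.
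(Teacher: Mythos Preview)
The paper does not supply its own proof of this proposition; it is quoted as a result of Cooper--de Silva--Sazdanovic (their Proposition 6.4 and Definition 2.2) and used as a black box throughout. So there is no argument in the paper to compare yours against.

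On its own merits your proof is the standard and correct uniqueness argument: well-founded induction on the total number of nonfaces $N(S)$, with the base case $N(S)=0$ pinned down by the normalization and the inductive step driven by the addition--contraction relation. Your verifications that $N(S\cup\{\sigma\})=N(S)-1$ (because a minimal nonface can be adjoined without violating closure under subsets) and that $N(S_{/\sigma})<N(S)$ (because the nonfaces of $S_{/\sigma}$ are exactly the nonfaces of $S$ lying inside $V\setminus\sigma$, a list that omits $\sigma$ itself) are both correct under the paper's description of the tidied contraction. Framing the argument as ``any two invariants $P,Q$ satisfying the axioms agree'' neatly sidesteps the question of whether the recursion is well-defined independent of the choice of $\sigma$, since existence is already furnished by the topological $\chi_c$. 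Your closing remark explaining why the count of \emph{minimal} nonfaces would fail as an induction statistic is a worthwhile caution.
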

	
	Given a manifold $M$, let $[M] = \chi_c(M)$. Using an inclusion-exclusion argument and the fact that $\chi_c(\mathbb{CP}^{t - 1}) = t$, we can obtain a more explicit expression for the simplicial chromatic polynomial with $\mathbb{CP}^{t - 1}$ substituted in for $M$ below. Note that $[M] = t$ if $M = \mathbb{CP}^{t - 1}$.

	\begin{align*}
		[M_S] &= \left[ M^n \setminus \bigcup_{\sigma \in \Delta^V \setminus S} D_\sigma \right] \\
		&= [M]^n - \left[ \bigcup_{\sigma \in \Delta^V \setminus S} D_\sigma  \right] \\
		&= [M]^n - \sum_{ \sigma_1, \ldots, \sigma_k \in I \subset [ \Delta^V \setminus S ] : |I| = k} (-1)^k [D_{\sigma_1} \cap \cdots \cap D_{\sigma_k}] 
	\end{align*}

	To simplify this final expression, we need to think about the number of ``independent values'' in an element of $D_{\sigma_1} \cap \cdots \cap D_{\sigma_k}$. This depends on the number of connected components of a certain graph (Definition \ref{compdef}).
	
	\begin{defn} \label{conncpt}
		Given a $k$-element subset $I = \{ \sigma_1, \ldots, \sigma_k \} \subset \Delta^V \setminus S$, let $G_I$ be the graph whose vertices are the $\sigma_i$ with two vertices corresponding to $\sigma_i, \sigma_j$ being connected by an edge if and only if $\sigma_i \cap \sigma_j \ne \emptyset$. Let $c(I)$ be the number of connected components of $G_I$.
	\end{defn}
	
	We can express the class $[ D_{\sigma_1} \cap \cdots \cap D_{\sigma_k} ]$ in $K_0(\Var_k)$ as a power of $[M]$ whose exponent depends on $c(I)$. More specifically, the class above simplifies to 
	
	\begin{equation} \label{inclexcleul}
		[M_S] = [M]^n - \sum_{ \sigma_1, \ldots, \sigma_k \in I \subset [ \Delta^V \setminus S ] : |I| = k} (-1)^k  [M]^{n - |\sigma_1 \cup \cdots \cup \sigma_k| + c(I)  }  
	\end{equation}
	
	for each simplicial complex $S$. Note that we can replace $\sigma \in \Delta^V \setminus S$ with \emph{minimal} nonfaces of $S$ in all the sums above. \\
	
	\begin{exmp}  \label{nonemptint}  (Nonfaces with pairwise nonempty intersections) \\ 
		If $\sigma_i \cap \sigma_j \ne \emptyset$ for all minimal nonfaces $\sigma_i, \sigma_j$ of $S$, we have that $c(I) = 1$ everywhere and
		
		\begin{equation} \label{nonemptinteqn}
			 [M_S] = [M]^n - \sum_{ \sigma_1, \ldots, \sigma_k \in I \subset [ \Delta^V \setminus S ] : |I| = k} (-1)^k  [M]^{n - |\sigma_1 \cup \cdots \cup \sigma_k| + 1.  }
		\end{equation}
		
		The individual terms of the sum \ref{nonemptinteqn} really only depend on $|\sigma_1 \cup \cdots \cup \sigma_k|$ and not the full information on the nonfaces $\sigma_1, \ldots, \sigma_k$ of $S$. An example where this occurs is Example 1.14 on p. 7 -- 8 of \cite{MS}. Note that this example applies since we can reduce to the case where the indices vary over \emph{minimal} nonfaces. It also gives a hint to a connection we will make with Stanley--Reisner rings which applies to arbitrary simplicial complexes.
	\end{exmp}

	 We would like to relate properties of this polynomial in $[M]$ with certain invariants of an algebraic structure associated to simplicial complexes which are parametrized by tuples of points which do \emph{not} belong to a specified simplicial complex as in the case of the simplicial chromatic polynomials above. 
	
	\begin{defn} \label{srringdef} (p. 201 -- 202 of \cite{Sta1}) \\
		Let $k$ be a field and $S$ be a simplicial complex. For a subset $A$ of the vertex set $V = \{ x_1, \ldots, x_n \}$, write $x_A = \prod_{x_i \in A} x_i \in k[x_1, \ldots, x_n]$. Let $I(A)$ denote the ideal in the polynomial ring $k[x_1, \ldots, x_n]$ generated by all monomials $x_\sigma$ such that $\sigma \notin S$ (i.e. $\sigma \in \Delta^V \setminus S$ in the notation above). \\
		
		Note that $I(S)$ is generated by the \emph{minimal} nonfaces of $S$ since $\sigma$ being a nonface and $\alpha \supset \sigma$ implies that $\alpha$ is also a nonface of $S$. The quotient ring $k[S] := k[V]/I(S)$ is called the \textbf{Stanley--Reisner ring (or face ring)} of $S$. 
	\end{defn}
	
	Throughout the proofs in this section, we implicitly use the following result to build simplicial complexes out of collections of finite sets.
	
	\begin{thm} \label{srcorr} (Stanley--Reisner correspondence, Definition 2.1, Definition 2.5, and Proposition 2.6 p. 211 -- 212  of \cite{FMS}) \\
		Let $X = \{ x_1, \ldots, x_n \}$. Each $x_i$ will be treated as the element $i$ of $\{ 1, \ldots, n \}$ when subsets are written below. Given a squarefree monomial ideal $I$, let \[ \Delta_I = \{ m \subset X : m \notin I \} \]
		be the simplicial complex consisting of squrefree monomials \emph{not} in $I$. \\
		
		Given a simplicial complex $\Delta$, let \[ I_\Delta = \langle m \subset X : m \notin \Delta \rangle. \]
		
		If $I$ is a squarefree monomial ideal, then $I_{\Delta_I} = I$. If $\Delta$ is a simplicial complex, then $\Delta_{I_\Delta} = \Delta$. \\ 
		
		The maps \[ \{ \text{squarefree monomial ideals} \}  \longrightarrow \{ \text{simplicial complexes} \}, I \mapsto \Delta_I \] and \[ \{ \text{simplicial complexes} \} \longrightarrow \{ \text{squarefree monomial ideals} \}, \Delta \mapsto I_\Delta \] induce a correspondence  \[ \{ \text{squarefree monomial ideals} \}  \longleftrightarrow \{ \text{simplicial complexes} \}.\]
	\end{thm}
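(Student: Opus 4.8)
The plan is to verify that the two assignments $I \mapsto \Delta_I$ and $\Delta \mapsto I_\Delta$ are well-defined, and then to establish the two round-trip identities $I_{\Delta_I} = I$ and $\Delta_{I_\Delta} = \Delta$; these identities say precisely that the two maps are mutually inverse, which yields the asserted bijection. Throughout I would rely on a single structural fact about monomial ideals: for squarefree monomials $x_{m'} \mid x_m$ holds if and only if $m' \subseteq m$, and a monomial lies in a monomial ideal exactly when it is divisible by one of the monomial generators. The standing abuse of notation is that a subset $m \subseteq X$ is identified with the squarefree monomial $x_m$, so that ``$m \in I$'' abbreviates ``$x_m \in I$''.

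First I would check well-definedness of both maps. For $\Delta_I = \{ m \subseteq X : x_m \notin I \}$ the point is downward closure: if $m \in \Delta_I$ and $m' \subseteq m$, then $x_{m'} \mid x_m$, so $x_{m'} \in I$ would force $x_m \in I$ by the ideal property, contradicting $m \in \Delta_I$; hence $m' \in \Delta_I$. Since $I$ is a proper ideal we have $1 = x_\emptyset \notin I$, so $\emptyset \in \Delta_I$ and $\Delta_I$ is a genuine simplicial complex. For $I_\Delta = \langle x_m : m \notin \Delta \rangle$ the output is squarefree monomial by construction, and because $\emptyset \in \Delta$ every generator has positive degree, so $I_\Delta$ is proper.

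Next I would prove $\Delta_{I_\Delta} = \Delta$. Unwinding the definitions, $m \in \Delta_{I_\Delta}$ means $x_m \notin I_\Delta$. If $m \notin \Delta$ then $x_m$ is itself one of the generators of $I_\Delta$, so $x_m \in I_\Delta$; contrapositively this gives $m \in \Delta_{I_\Delta} \Rightarrow m \in \Delta$. Conversely, suppose $m \in \Delta$ but $x_m \in I_\Delta$. Then $x_m$ is divisible by some generator $x_{m'}$ with $m' \notin \Delta$, whence $m' \subseteq m$; downward closure of $\Delta$ then forces $m' \in \Delta$, a contradiction. Thus $m \in \Delta \Rightarrow m \in \Delta_{I_\Delta}$, and the two families coincide.

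Finally I would prove $I_{\Delta_I} = I$. By definition $I_{\Delta_I} = \langle x_m : m \notin \Delta_I \rangle = \langle x_m : x_m \in I \rangle$, the ideal generated by all squarefree monomials lying in $I$. The inclusion $I_{\Delta_I} \subseteq I$ is immediate, since every listed generator already belongs to $I$. For the reverse inclusion I would invoke the hypothesis that $I$ is a \emph{squarefree} monomial ideal: it admits a generating set of squarefree monomials $x_{m_1}, \ldots, x_{m_s}$, each of which lies in $I$ and hence appears among the generators of $I_{\Delta_I}$, giving $I \subseteq I_{\Delta_I}$. Combining the two round-trip identities exhibits the maps as inverse bijections. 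The only place where genuine care is needed is this last reverse inclusion: it truly uses that $I$ is generated by squarefree, rather than arbitrary, monomials, which is exactly the hypothesis that restricts the left-hand side of the correspondence to squarefree monomial ideals.
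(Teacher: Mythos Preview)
Your proof is correct and is the standard verification of the Stanley--Reisner correspondence. Note, however, that the paper does not supply its own proof of this theorem: it is quoted as a known result from the survey of Francisco--Mermin--Schweig \cite{FMS} and is used as a black box in the proof of Theorem~\ref{coverchromhilb}. So there is no ``paper's own proof'' to compare against; your argument simply fills in what the cited reference provides.
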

	
	The connection of this ring with the simplicial chromatic polynomial comes from its Hilbert series. Let $S = k[x_1, \ldots, x_n]$ and let $J = \langle m_1, \ldots, m_r \rangle$ be a monomial ideal of $S$. The main idea is that the numerator of the Hilbert series $H(S/J, x)$ is the sum of the monomials which are \emph{not} in $J$ (top of p. 7 of \cite{MS}). One way to find these is to subtract the union of monomials which are in $J$ from the total set of \emph{all} monomials while counting the former using inclusion-exclusion. Normalizing (by division by $(1 - x_1) \cdots (1 - x_n)$), this gives the following polynomial in the numerator of the Hilbert series as shown on p. 1295 -- 1296 of \cite{GW}:
	
	\begin{equation} \label{hilbratl}
		H(S/J, x) = H(S, x) - H(J, x) = \sum_{\alpha \in \mathbb{N}^n} x^\alpha - \sum_{\substack{x^\alpha \in \langle m_I \rangle \\ I \subset [r] \\ |I| = 1}} x^\alpha + \sum_{\substack{x^\alpha \in \langle m_I \rangle \\ I \subset [r] \\ |I| = 2}} x^\alpha - \ldots + (-1)^r \sum_{\substack{x^\alpha \in \langle m_I \rangle \\ I \subset [r] \\ I = [r] }} x^\alpha, 
	\end{equation}
	
	where $[r] = \{ 1, \ldots, r \}$ and $m_I = \lcm(m_i : i \in I)$ for a subset $I \subset \{ 1, 2, \ldots, r \}$. In this case, the alternating sum of least common multiples of tuples of generators $m_i$ of $M$. Taking the $\lcm$ ensures that repeating indices aren't counted twice. Applying the normalization $(1 - x_1) \cdots (1 - x_n)$, \ref{hilbratl} can be rewritten as 
	
	\begin{equation} \label{hilbnorm}
		H(S/J, x) = \frac{1 - \sum_{\substack{I \subset [r] \\ |I| = 1}} m_I + \sum_{\substack{I \subset [r] \\ |I| = 2}} m_I - \ldots + (-1)^r \sum_{\substack{I \subset [r] \\ I = [r]}} m_I }{(1 - x_1) \cdots (1 - x_n)},
	\end{equation}
	
	 where $[r] = \{ 1, \ldots, r \}$ and $m_I = \lcm(m_i : i \in I)$ for a subset $I \subset \{ 1, 2, \ldots, r \}$ as above. Let $K(S/M, x)$ be the numerator of \ref{hilbnorm}. Since $m_I = \lcm(m_i : i \in I)$ and each of the $m_i$ are squarefree, we have that $\supp m_I = \bigcup_{i \in I} \supp m_i$. \\
	
	Putting everything together, we start finding the main relations between simplicial chromatic polynomials and Stanley--Reisner rings. We can start with the case where $c(I) = 1$ for all subsets $I \subset \Delta^V \setminus S$. 
	
	\begin{prop} \label{simpchromsrintcase}
		Given a manifold $M$, let $[M] = \chi_c(M)$. Let $S$ be a simplicial complex with vertex set $V = \{ 1, \ldots, n \}$ and $\sigma_1, \ldots, \sigma_r$ be the minimal nonfaces of $S$. If $c(I) = a$ for all $I \subset [f]$ with $|I| \ge 2$ (Definition \ref{compdef}), we have that 
		
		\begin{align*}
			\chi_c(S)(t) - t^n + (t^{n + 1} - t^{n + a}) \sum_{\sigma_i} t^{-|\sigma_i|} &= t^{n + a}( (1 - t^{-1})^{n - d} h_S(t^{-1}) - 1 ) \\
			&= t^{d + a} ( (t - 1)^{n - d} h_S(t^{-1}) - t^{n - d} ) \\
			\Longrightarrow \frac{\chi_c(S)(t) - t^n + t^{n + a} +  (t^{n + 1} - t^{n + a}) \sum_{\sigma_i} t^{-|\sigma_i|}   }{(t - 1)^{n - d}} &= t^{d + a} h_S(t^{-1}),
		\end{align*} 
		
		where the $\sigma_i$ are the minimal nonfaces of $S$. \\
		
		If $c(I) = 1$ for all $I \subset [n]$, then the formula above specializes to \[ \chi_c(S)(t) - t^n = t^{d + 1} ((t - 1)^{n - d} h_S(t^{-1} - t^{n - d}) ). \] Equivalently, we have that \[ \frac{\chi_c(S)(t) - t^n + t^{n + 1}}{t(t - 1)^{n - d}} = t^d h_S(t^{-1}). \] This connects the simplicial chromatic polynomial to the reciprocal polynomial of $h_S(t)$.
	\end{prop}

	\begin{rem}
		In Example \ref{nonemptint}, we have $c(I) = 1$ for all $I$. 
	\end{rem}
	
	\color{black} 
	\begin{proof}
		Since we take $x = (x_1, \ldots, x_n)$ in $K(S/J, x)$, the expression on the right hand side means substituting $x_i = [M]^{-1}$ for each $i \in [n]$. This is because the number of intersections taken in the $M_S$ setting and the number of simplices used in the inclusion-exclusion argument match up. Note that taking the $\lcm$ amounts to taking a union with the degree of each monomial in $H(S/M, x)$ giving the size of the set $|\sigma_1 \cup \cdots \cup \sigma_k|$.
	\end{proof}
	
	The following result implies that the ``interesting'' part of the simplicial chromatic polynomial (i.e. the term obtained from subtracting the leading term) only depends on the $h$-vector of the simplicial complex: \\
	
	\begin{prop} \label{hilbpoinhvec} (Corollary 1.15 on p. 8 of \cite{MS}) \\
		Let $f_i$ be the number of $i$-faces in the simplicial complex $S$. Then, \[ H(S/I(S); t, \ldots, t) = \frac{1}{(1 - t)^n} \sum_{n = 0}^d f_{i - 1} t^i (1 - t)^{n - i} =  \frac{h_0 + h_1 t + h_2 t^2 + \ldots + h_d t^d}{(1 - t)^d}, \]
		
		where $d = \dim S + 1$.
	\end{prop}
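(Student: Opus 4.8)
The plan is to compute the $\mathbb{Z}^n$-graded Hilbert series of $k[S]$ directly from a monomial $k$-basis, then specialize all variables to $t$ and clear denominators. First I would recall the standard fact that, since $I(S)$ is a monomial ideal, the residues of those monomials $x^\alpha$ ($\alpha \in \mathbb{N}^n$) whose support $\supp(\alpha) = \{i : \alpha_i \neq 0\}$ is a face of $S$ form a $k$-vector space basis of $k[S] = k[x_1,\ldots,x_n]/I(S)$. Indeed such a monomial is not in $I(S)$ (its support is a face, hence contains no minimal nonface), every monomial whose support is a nonface lies in $I(S)$, and the monomials spanning $I(S)$ over $k$ are exactly those with nonface support; hence $k[x_1,\ldots,x_n] = I(S) \oplus \mathrm{span}_k\{x^\alpha : \supp(\alpha) \in S\}$ as graded vector spaces, which gives both spanning and linear independence in the quotient.

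Grouping these basis monomials according to their support $\sigma \in S$, and observing that the monomials with support exactly $\sigma$ are the $\prod_{i \in \sigma} x_i^{a_i}$ with all $a_i \ge 1$, I would obtain the fine Hilbert series
\[ H(k[S]; x_1,\ldots,x_n) = \sum_{\sigma \in S} \prod_{i \in \sigma} \frac{x_i}{1 - x_i}. \]
Specializing $x_i = t$ for all $i$, a face $\sigma$ with $|\sigma| = i$ (i.e.\ $\dim\sigma = i-1$) contributes $\bigl(t/(1-t)\bigr)^i$, and there are $f_{i-1}$ such faces under the convention $f_{-1}=1$ for the empty face, so with $d = \dim S + 1$,
\[ H(k[S]; t,\ldots,t) = \sum_{i=0}^{d} f_{i-1} \left( \frac{t}{1-t} \right)^{i}. \]
Putting the terms over the common denominator $(1-t)^n$ yields $\frac{1}{(1-t)^n}\sum_{i=0}^d f_{i-1} t^i (1-t)^{n-i}$, the first claimed expression (the summation index should read $i$, not $n$, in the statement). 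Factoring $(1-t)^{n-d}$ out of every summand reduces this to $\frac{1}{(1-t)^d}\sum_{i=0}^d f_{i-1} t^i (1-t)^{d-i}$, and the last equality is then simply the definition of the $h$-vector: one sets $h_j$ equal to the coefficient of $t^j$ in $\sum_{i=0}^d f_{i-1} t^i (1-t)^{d-i}$, so that $H(k[S]; t,\ldots,t) = (h_0 + h_1 t + \cdots + h_d t^d)/(1-t)^d$.

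The only genuinely substantive step is the identification of the monomial basis of $k[S]$ and the resulting product formula for the fine-graded Hilbert series; everything afterward is bookkeeping with geometric series and a definitional rewrite of the $h$-vector. I expect the main (and only mild) obstacle to be stating the basis claim carefully — in particular the linear independence, namely that no nonzero $k$-linear combination of ``support-a-face'' monomials lies in $I(S)$ — which is exactly the assertion that a monomial ideal is spanned over $k$ by the monomials it contains, together with the Stanley--Reisner correspondence (Theorem \ref{srcorr}) identifying the monomials in $I(S)$ with those of nonface support. Since this basis statement is the content of the cited Corollary 1.15 of \cite{MS}, one may alternatively just invoke it and carry out only the specialization and rewriting steps.
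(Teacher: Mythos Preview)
Your argument is correct and is the standard proof of this fact. Note, however, that the paper does not give its own proof of this proposition: it is quoted as Corollary~1.15 of \cite{MS} and used as a black box, so there is no in-paper argument to compare against. What you wrote is essentially the proof one finds in \cite{MS} (or any standard reference on Stanley--Reisner rings): identify the $k$-basis of $k[S]$ as the monomials with face support, sum the resulting geometric series face by face to get the fine Hilbert series $\sum_{\sigma \in S}\prod_{i\in\sigma} x_i/(1-x_i)$, specialize, and rewrite via the definition of the $h$-vector. Your remark about the typo in the summation index (it should be $i$, not $n$) is also correct.
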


	There are analogues of the results above which we can give for general simplicial complexes. For a general simplicial complex, terms of the sum describing the class $[M_S]$ in $K_0(\Var_k)$ corresponding to a subset $I = \{ \sigma_1, \ldots, \sigma_k \} \subset \Delta^V \setminus S$ depends on more than $|\sigma_1 \cup \cdots \cup \sigma_k|$. 
	
	We can find a counterpart of Proposition \ref{simpchromsrintcase} for simplicial complexes satisfying a (mild) assumption on finite set covers. This enables us to express the nontrivial part of $[M_S]$ in terms of the Hilbert--Poincar\'e series of the Stanley--Reisner ring of an auxiliary simplicial complex depending on $S$ (Theorem \ref{coverchromhilb}).

	\begin{proof} (of Theorem \ref{coverchromhilb}) \\
		Compare the terms of the expansion \[ [M_S] = [M]^n - \sum_{ \sigma_1, \ldots, \sigma_k \in I \subset [ \Delta^V \setminus S ] : |I| = k} (-1)^k  [M]^{n - |\sigma_1 \cup \cdots \cup \sigma_k| + c(I)  }   \] with those of \[ H(S/J, x) = \frac{1 - \sum_{\substack{I \subset [r] \\ |I| = 1}} m_I + \sum_{\substack{I \subset [r] \\ |I| = 2}} m_I - \ldots + (-1)^r \sum_{\substack{I \subset [r] \\ I = [r]}} m_I }{(1 - x_1) \cdots (1 - x_n)}, \]
		
		where $[r] = \{ 1, \ldots, r \}$ and $m_I = \lcm(m_i : i \in I)$ for a subset $I \subset \{ 1, 2, \ldots, r \}$. Given a simplicial complex $T$, let $d = \dim T + 1$. Recall that
		
		\begin{equation} \label{hvecfvechilb}
			H(T/I(T); t, \ldots, t) = \frac{1}{(1 - t)^n} \sum_{n = 0}^d f_{i - 1} t^i (1 - t)^{n - i} =  \frac{h_0 + h_1 t + h_2 t^2 + \ldots + h_d t^d}{(1 - t)^d} 
		\end{equation}
		
      	by Proposition \ref{hilbpoinhvec}. \\
      	
      	In the sum \ref{hvecfvechilb} (which results from the specialization $x_1 = x_2 = \cdots  = x_n = t$), terms of degree $u$ correspond to $\beta_1, \ldots, \beta_k \in \Delta^V \setminus T$ such that $|\beta_1 \cup \cdots \cup \beta_k| = u$ since $\lcm(x^{\beta_1}, \ldots, x^{\beta_k}) = x^{\beta_1 \cup \cdots \cup \beta_k}$. Note that substituting $x_1 = x_2 = \cdots  = x_n = t$ into $x^{\beta_1 \cup \cdots \cup \beta_k}$ gives $t^{|\beta_1 \cup \cdots \cup \beta_k|}$. For $i \in [r]$, each $m_i$ in $J = \langle m_1, \ldots, m_r \rangle$ from \ref{hilbratl} corresponds to a subset of $[n]$ giving a minimal nonface of the simplicial complex $T$. The class $[M_S]$ is more closely related to the expression we obtain by replacing $t$ with $t^{-1}$ in $H(T/I(T); t, \ldots, t)$ in a way analogous to Proposition \ref{simpchromsrintcase}. \\
      	
      	The idea is to find subsets $\alpha_1, \ldots, \alpha_r \subset [n]$ such that $|\sigma_{i_1} \cup \cdots  \cup \sigma_{i_k}| - c(I) = |\alpha_{i_1} \cup \cdots \cup \alpha_{i_k}|$. Setting $\sigma_I = \sigma_{i_1} \cup \cdots \cup \sigma_{i_k}$ and $\alpha_I = \alpha_{i_1} \cup \cdots \cup \alpha_{i_k}$ for $I = \{ i_1, \ldots, i_k \}$, the claim can be rewritten as $|\sigma_I| - c(I) = |\alpha_I|$.  As a polynomial, this would let us use the sum from Proposition \ref{hilbpoinhvec} if the $\alpha_1, \ldots, \alpha_r$ give the Stanley--Reisner ideal of some simplicial complex. By the Stanley--Reisner correspondence (Theorem \ref{srcorr}), such a simplicial complex does exist. The auxiliary simplicial complex $T(S)$ in the statement of Theorem \ref{coverchromhilb} would be the simplicial complex whose minimal nonfaces correspond to the $\alpha_i$ via the Stanley--Reisner correspondence. Thus, the problem is reduced to showing that the assumptions in the statement allow the existence of such $\alpha_i$. \\
      	
      	We use induction on $|I|$ to show that the sets $\alpha_i$ in the definition of property $I$ (Definition \ref{propidef}) satisfy the relation $|\sigma_I| - c(I) = |\alpha_I|$. Since $c(I) = 1$ if $|I| = 1$, the case where $|I| = 1$ is really the statement that $|\alpha_i| = |\sigma_i| - 1$. Suppose that $|\alpha_I| = |\sigma_I| - c(I)$. Given $p \notin I$, let $J = I \cup \{ p \}$. We would like to show that $|\alpha_{J_p}| = |\sigma_{J_p}| - c(J_p)$. Since $\sigma_p$ adds a new component if and only if $\sigma_p \cap \sigma_i = \emptyset$ for all $i \in I$, we have that 
      	
      	\[ c(J_p) = \begin{cases} 
      		c(I) & \text{ if } \sigma_i \cap \sigma_p \ne \emptyset \text{ for some $i \in I$ } \\
      		c(I) + 1 & \text{ if } \sigma_i \cap \sigma_p = \emptyset \text{ for all $i \in I$ } 
      	\end{cases}
      	\]
      	
      	Using the notation $\sigma_I = \bigcup_{i \in I} \sigma_i$, this simplifies to 
      	
      	\[ c(J_p) = \begin{cases} 
      		c(I) & \text{ if } \sigma_I \cap \sigma_p \ne \emptyset \\
      		c(I) + 1 & \text{ if } \sigma_I \cap \sigma_p = \emptyset.  
      	\end{cases}
      	\]
      	
      	If $\sigma_I \cap \sigma_p = \emptyset$, then $|\sigma_{J_p}| = |\sigma_I| + |\sigma_p|$ and
      	
      	\begin{align*}
      		|\sigma_{J_p}| - c(J_p) &= |\sigma_I| + |\sigma_p|  - c(I) - 1 \\
      		&= (|\sigma_I| - c(I)) + (|\sigma_p| - 1) \\
      		&= |\alpha_I| + |\alpha_p| \\
      		&= |\alpha_{J_p}|,
      	\end{align*}
      	
      	where the last line follows from the assumption that $\alpha_I \cap \alpha_p = \emptyset$ if we have $I \subset [r]$ and $p \notin I$ such that $\sigma_I \cap \sigma_p = \emptyset$ and we are considering the case where $\sigma_I \cap \sigma_p = \emptyset$ (which means that $\alpha_I \cap \alpha_p = \emptyset$). \\
      	
      	Now consider the case where $\sigma_I \cap \sigma_p \ne \emptyset$. In this case, we have that $c(J_p) = c(I)$. Note that $|\sigma_{J_p}| = |\sigma_I| + |\sigma_p| - |\sigma_I \cap \sigma_p|$. Similarly, we have that 
      	
      	\begin{align*}
      		|\alpha_{J_p}| &= |\alpha_I| + |\alpha_p| - |\alpha_I \cap \alpha_p| \\ 
      		&= |\sigma_I| - c(I) + |\sigma_p| - 1 - |\alpha_I \cap \alpha_p| \\
      		&= |\sigma_I| - c(I) + |\sigma_p| - 1 - |\sigma_I \cap \sigma_p| + 1 \\
      		&= |\sigma_I| + |\sigma_p| - |\sigma_I \cap \sigma_p| - c(I) \\
      		&= |\sigma_{J_p}| - c(I),
      	\end{align*} 
      	
      	where we used the assumption that $|\alpha_I \cap \alpha_p| = |\sigma_I \cap \sigma_p| - 1$ if $\sigma_I \cap \sigma_p \ne \emptyset$ in the third line. Thus, the desired conclusion follows from induction on the size of $I \subset [r]$. 
      	
	\end{proof}

	\begin{exmp} \label{extexmp}
		We give some examples where the assumptions of Theorem \ref{coverchromhilb} hold. Note that it suffices to start with an initial collection of subsets $\sigma_i \subset [n]$ (taken to be generators of a squarefree monomial ideal) since they can always be taken to be the minimal nonfaces of some simplicial complex by the Stanley--Reisner correspondence (p. 212 of \cite{FMS}).  \\
		
		\begin{enumerate}
			\item Suppose that $r = 2$ (i.e. that there are two minimal nonfaces) and $\sigma_1 \cap \sigma_2 = \emptyset$. For example, this applies to the simplicial complex $\Delta$ corresponding to the square graph $abcd$ for the vertex set $a, b, c, d$, which has $I(\Delta) = \langle ac, bd \rangle$ (Example 1.15 on p. 2 of \cite{Stu}). Then, we can simply take $\alpha_i$ to be any subset of $\sigma_i$ with one element removed. Since $\alpha_1 \subset \sigma_1$ and $\alpha_2 \subset \sigma_2$, we have that $\alpha_1 \cap \alpha_2 = \emptyset$. If we take $I = \{ 1, 2 \}$, then $|\sigma_I \cap \sigma_i| = |\sigma_i|$ and $|\alpha_I \cap \alpha_i| = |\alpha_i| = |\sigma_i| - 1$ and the assumptions of Theorem \ref{coverchromhilb} are satisfied. These arguments apply in general when $\sigma_i \cap \sigma_j$ for $i \ne j$. 
			
			\item Let $r = 3$ and take subsets $\sigma_1, \sigma_2, \sigma_3 \subset [n]$ of size $\ge 2$ such that $\sigma_i \cap \sigma_j = \sigma_1 \cap \sigma_2 \cap \sigma_3 = \{ a \}$ for some point $a$. Let $\alpha_i = \sigma_i \setminus \{ a \}$. Since the $|I| = 3$ case is trivial, it suffices to consider the cases $|I| = 1$ and $|I| = 2$. The $|I| = 1$ case has to do with pairwise intersections. This works since $|\alpha_i \cap \alpha_j| = 0 = |\sigma_i \cap \sigma_j| - 1$ for $i \ne j$. As for the $|I| = 2$ case, the nontrivial instance is from noting that $|\alpha_1 \cap \alpha_2 \cap \alpha_3| = 0 = |\sigma_1 \cap \sigma_2 \cap \sigma_3| - 1 = 1$. This satisfies the assumptions of Theorem \ref{coverchromhilb}. Adding a subset $\sigma_4$ such that $\sigma_4 \cap \sigma_i = \emptyset$ for $i = 1, 2, 3$ still gives a collection of subsets satisfying the assumptions of Theorem \ref{coverchromhilb}. 
			
			\item Another example with $r = 3$ uses $\sigma_1, \sigma_2, \sigma_3 \subset [n]$ such that $|\sigma_1 \cap \sigma_2| = |\sigma_1 \cap \sigma_3| = |\sigma_2 \cap \sigma_3| = 1$ and $\sigma_1 \cap \sigma_2 \cap \sigma_3 = \emptyset$. Let $a = \sigma_1 \cap \sigma_2$, $b = \sigma_1 \cap \sigma_3$, and $c = \sigma_2 \cap \sigma_3$. Then, take $\alpha_1 = \sigma_1 \setminus \{ a \}$, $\alpha_2 = \sigma_2 \setminus \{ c \}$, and $\alpha_3 = \sigma_3 \setminus \{ b \}$. As mentioned in the previous example, the conditions with $|I| = 3$ are trivial. It remains to consider the cases with $|I| = 1$ and $|I| = 2$. We are done with the $|I| = 1$ case since the $\alpha_i$ are pairwise disjoint. The nontrivial part of the $|I| = 2$ case comes from noting that $\sigma_1 \cap \sigma_2 \cap \sigma_3 = \emptyset$ and $\alpha_1 \cap \alpha_2 \cap \alpha_3 = \emptyset$. 
		\end{enumerate}
	\end{exmp}
	
	\pagebreak 
	
	\color{black}
	\begin{rem} \label{transform} ~\\
		\vspace{-3mm}
		\begin{enumerate}
			\item 
			If we don't invert the variables in $\chi_c(S, t)$ and $S$ is a Cohen--Macaulay simplicial complex, the expression obtained for the simplicial chromatic polynomial $\chi_c(S, t)$ is also closely related to a specialization of the Hilbert--Poincar\'e series of the *canonical module of $k[S]$ (Exercise 5.6.6 on p. 246 of \cite{BH}): \[ H_{\omega_{k[S]}}(t) = \sum_{F \in S} \dim_k \widetilde{H}_{\dim \lk F} (\lk F; k) \prod_{X_i \in F} \frac{t_i}{1 - t_i} = (-1)^d H_{k[S]}(t_1^{-1}, \ldots, t_n^{-1}) \]
			
			The intermediate expression $\lk F = \{ G : F \cup G \in S, F \cap G = \emptyset \}$ (Definition 5.3.4 on p. 232 of \cite{BH}) is also involved in determining when a simplicial complex is Cohen--Macaulay (Theorem 12.27 on p. 211 of \cite{Sta1}). There are even simpler relations with $H_{k[S]}(t_1, \ldots, t_n) = (-1)^d H_{k[S]}(t_1^{-1}, \ldots, t_n^{-1})$ if the simplicial complexes involved are Euler complexes (Exercise 5.6.7 on p. 246 of \cite{BH}) and $t^a H_{k[S]}(t_1, \ldots, t_n) = (-1)^d H_{k[S]}(t_1^{-1}, \ldots, t_n^{-1})$ if they are Gorenstein complexes (Exercise 5.6.9 on p. 246 of \cite{BH}).

			\item The Hilbert series of auxiliary simplicial complexes determined by the simplicial chromatic polynomials in the setting of Proposition \ref{simpchromsrintcase} and Theorem \ref*{coverchromhilb} can be used to generate Hilbert series of other rings using operations such as addition, partial sums, and multiplication related to joins of simplicial complexes as indicated in Corollary 6.6 on p. 738 of \cite{CdSS} (Proposition 2.4 on p. 131 of \cite{Bre}).
		\end{enumerate}

	\end{rem}
	
	We can combine Proposition \ref{hilbpoinhvec} with the main results (Proposition \ref{simpchromsrintcase} or Theorem \ref{coverchromhilb}) and the defining property of the simplicial chromatic polynomial (Proposition \ref{adddelsimpchrom}) to make an observation on simplicial chromatic polynomials of simplicial complexes $S$ in the setting of the results above.
	
	\begin{cor} \label{simpchromdet}
		Suppose that $S$ is a simplicial complex satisfying the assumptions of Proposition \ref{simpchromsrintcase} or Theorem \ref{coverchromhilb}.
		
		\begin{enumerate}
			
			\item The simplicial chromatic polynomial $\chi_c(S, t)$ is entirely determined by the $h$-vector of $S$. 
			
			\item The $h$-vector of the associated simplicial complex ($S$ in Proposition \ref{simpchromsrintcase} and $T = T(S)$ in Theorem \ref*{coverchromhilb}) is entirely determined by the defining addition-contraction relation and normalization for simplicial chromatic polynomials from Proposition 6.4 on p. 738 of \cite{CdSS}.
		\end{enumerate}

	\end{cor}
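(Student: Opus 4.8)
\emph{Proof sketch.} The plan is to push both parts through the explicit formula already established relating $\chi_c(S,t)$ to a Stanley--Reisner Hilbert series, combined with Proposition~\ref{hilbpoinhvec}, and then to exploit that the resulting passage between an $h$-vector and the polynomial $\chi_c(S,t)$ is an invertible change of variables. For Part~1 in the setting of Proposition~\ref{simpchromsrintcase} I would start from $\chi_c(S)(t) - t^n = t^{n+a}\bigl(K(S/J,t^{-1})-1\bigr)$ with $J = I(S)$, and feed in the identity of Proposition~\ref{hilbpoinhvec}, which writes the one-variable specialization of the numerator $K(S/J,\cdot)$ as a fixed invertible transform of the $h$-polynomial $h_0 + h_1 t + \cdots + h_d t^d$ of $S$; since $h_0 = f_{-1} = 1$, every term on the right-hand side then becomes a function of $h_0,\dots,h_d$ alone, so $\chi_c(S,t)$ is determined by the $h$-vector of $S$. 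In the setting of Theorem~\ref{coverchromhilb} the same conclusion is immediate from the displayed identity $\chi_c(S)(t) = t^n h_{T(S)}(t^{-1})$, which exhibits $\chi_c(S,t)$ directly as a function of the $h$-vector of the auxiliary complex $T(S)$: the coefficient of $t^{n-i}$ is exactly $h_i$.

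For Part~2 I would run these identities backwards. In each case the coefficient of a specific power of $t$ in $\chi_c(S)(t)$ equals a single entry $h_i$ of the associated complex ($S$ for Proposition~\ref{simpchromsrintcase}, $T(S)$ for Theorem~\ref{coverchromhilb}): after the substitution $t \mapsto t^{-1}$ and clearing the relevant power of $t$ (and, in the Proposition~\ref{simpchromsrintcase} case, the relevant power of $1 - t$), the map from $h$-vector to polynomial is diagonal and hence invertible, so $\chi_c(S)(t)$ determines the $h$-vector. By Proposition~\ref{adddelsimpchrom}, on the other hand, $\chi_c(S)(t)$ is itself pinned down by the normalization $\chi_c(\Delta^{t-1},t) = t^n$ together with the addition--contraction relation. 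Composing the two implications, the $h$-vector of the associated simplicial complex is determined by the normalization and the addition--contraction relation, which is Part~2.

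I do not anticipate a deep obstacle; the real work is the bookkeeping that makes the correspondence $h\text{-vector} \leftrightarrow \chi_c(S,t)$ honestly invertible. Concretely I would check two things. First, that the exponents of $t$ carrying the data $h_0,\dots,h_d$ in the two expansions --- powers $n-i$ in the Theorem~\ref{coverchromhilb} case, and $n$ together with $n+a-i$ in the Proposition~\ref{simpchromsrintcase} case --- are pairwise distinct and land in the expected range, so that each $h_i$ is genuinely isolated as an individual coefficient; strict monotonicity in $i$ within each block, with $h_0 = 1$ absorbing the leading $t^n$, settles this. Second, that in Part~2 it is harmless for the addition--contraction recursion of Proposition~\ref{adddelsimpchrom} to pass through complexes not satisfying property~$I$ (or $c(I) \equiv a$): one only uses that this recursion terminates at the normalized value $t^n$ on full simplices and therefore outputs a well-defined polynomial $\chi_c(S)(t)$, from which the $h$-vector is then extracted, so no hypothesis on the intermediate complexes is required.
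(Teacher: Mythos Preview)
Your approach is essentially the same as the paper's. The paper does not supply a formal proof for this corollary; it is stated immediately after the sentence ``We can combine Proposition~\ref{hilbpoinhvec} with the main results (Proposition~\ref{simpchromsrintcase} or Theorem~\ref{coverchromhilb}) and the defining property of the simplicial chromatic polynomial (Proposition~\ref{adddelsimpchrom})\ldots'' and left at that. Your sketch is precisely the natural unpacking of that sentence: read off the $h$-vector from the explicit identities $\chi_c(S)(t)-t^n = t^{n+a}(K(S/J,t^{-1})-1)$ and $\chi_c(S)(t)=t^n h_{T(S)}(t^{-1})$, observe the passage is invertible coefficient-by-coefficient, and then invoke Proposition~\ref{adddelsimpchrom} for the converse direction. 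The two bookkeeping checks you flag (distinctness of the exponents carrying the $h_i$, and the fact that the recursion may pass through complexes outside the hypotheses) are the right things to verify and go through as you indicate. One small point worth noting: the paper's Part~1 literally says ``$h$-vector of $S$,'' but in the Theorem~\ref{coverchromhilb} setting your argument (correctly, and consistently with Part~2's own phrasing) yields determination by the $h$-vector of $T(S)$ rather than of $S$ itself; this appears to be a slight imprecision in the statement rather than a gap in your reasoning.
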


	\subsection{Simplicial chromatic polynomials as Hilbert polynomials of other rings}

	We can also connect the original simplicial chromatic polynomials $\chi_c(S, t)$ associated to a simplicial complex $S$ to Hilbert series of rings without making any modifications on the latter ring. More specifically, we will work with conditions under which a polynomial is a Hilbert polynomial (i.e. ``approximately a Hilbert series'' -- see p. 131 and Theorem 2.2 on p. 129 of \cite{Bre}) which are listed below.

	\begin{prop} \label{hilbpolycrit} (Corollary 3.10 on p. 138 of \cite{Bre}) \\
		Let $P(x) = \sum_{i = 0}^d a_i x^i$. If $a_0, \ldots, a_d \in \mathbb{N}$ and $a_1, a_2 \ge 3$, then $P(x)$ is a Hilbert polynomial associated to some  $k$-algebra.
	\end{prop}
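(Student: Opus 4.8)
The plan is to obtain this as a special case of the general theory of \cite{Bre}, which characterizes exactly which polynomials occur as Hilbert polynomials of finitely generated graded $k$-algebras (or modules) — this is Theorem~2.2 of \cite{Bre}, Corollary~3.10 being the clean sufficient form quoted here. Two structural facts carry most of the weight. First, the set of Hilbert polynomials is closed under addition: given $k$-algebras $A$ and $B$, the fibre product $A\times_k B$ over $k$ is again finitely generated and standard graded with $\dim_k(A\times_k B)_n=\dim_k A_n+\dim_k B_n$ in all positive degrees, so its Hilbert polynomial is the sum. Second, there is an explicit supply of ``atoms'': the Hilbert polynomial of $k[x_1,\dots,x_{m+1}]$ is $\binom{x+m}{m}$, so after an $m!$-fold fibre product the integer-coefficient polynomial $(x+1)(x+2)\cdots(x+m)$ is a Hilbert polynomial, and hypersurface quotients give further atoms (e.g.\ a quadric in $\mathbb{P}^3$ realizes $(x+1)^2$). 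The strategy is to write the given $P$ as a nonnegative-integer combination of such atoms, equivalently — via Macaulay's theorem — to exhibit an $O$-sequence agreeing with $n\mapsto P(n)$ and verify the Macaulay growth bounds.

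Concretely I would pass to the binomial basis, writing $P(x)=\sum_{m=0}^{d}b_m\binom{x}{m}$ with $b_m\in\mathbb{Z}$ (an integer-coefficient polynomial is integer-valued and the change of basis is unitriangular), and note first that $a_2\ge 3$ already forces $d=\deg P\ge 2$. One then translates $a_0,\dots,a_d\in\mathbb{N}$ together with $a_1,a_2\ge 3$ into inequalities on the $b_m$ and checks that these put $P$ inside the realizable cone. The point of demanding $a_1,a_2\ge 3$ rather than mere nonnegativity is to leave enough slack in the low-degree coefficients to cancel the negative corrections that appear when the atoms are expanded in the power basis — these corrections are governed by Stirling/Eulerian numbers and first affect the linear and quadratic coefficients — while $a_i\ge 0$ for $i\ge 3$ handles the higher-order part directly; the value $3$ is essentially what is needed to make the bottom Macaulay inequality (of the shape $P(2)\le\binom{P(1)+1}{2}$, and its analogues in higher degree) go through.

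The step I expect to be the real work is this numerical bookkeeping: showing that the three hypotheses simultaneously force all of the finitely many inequalities in Brenti's criterion, especially the coupling between the low coefficients $a_0,a_1,a_2$ and the top coefficient $a_d$, and accommodating $P(0)=a_0$, which the hypotheses leave entirely free and which must therefore be absorbed by the choice of generator degrees in the realizing algebra or module rather than by any inequality. Once the inequality chase is complete, the realizing $k$-algebra is produced directly by the constructions of \cite{Bre}, with no further homological or geometric input required.
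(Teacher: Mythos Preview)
The paper does not prove this proposition: it is quoted verbatim as Corollary~3.10 of \cite{Bre} and invoked as a black box in the proof of Corollary~\ref{simpchromhilbpol}. There is therefore no proof in the paper to compare your proposal against. Your sketch is a reasonable outline of how Brenti's own argument proceeds (closure under sums, the binomial basis, and the Macaulay $O$-sequence inequalities), but for the purposes of this paper nothing beyond the citation is required.
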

	
	Combining Proposition \ref{hilbpolycrit} with the inclusion-exclusion arguments used earlier, we find some conditions under which $[M_S] - [M]$ is a Hilbert polynomial with $[M]$ substituted in for the variable (Corollary \ref{simpchromhilbpol}).

	\begin{proof} (of Corollary \ref{simpchromhilbpol}) \\
		The expansion \[ [M_S] = [M]^n - \sum_{\substack{\sigma_1, \ldots, \sigma_k \in I \subset [ \Delta^V \setminus S ] \\ |I| = k}} (-1)^k  [M]^{n - |\sigma_1 \cup \cdots \cup \sigma_k| + c(I)  }  \] 
		
		implies that \[ [M]^n - [M_S] = [M]^n \sum_{\substack{\sigma_1, \ldots, \sigma_k \in I \subset [ \Delta^V \setminus S ] \\ |I| = k}} (-1)^k  [M]^{- |\sigma_1 \cup \cdots \cup \sigma_k| + c(I)  } . \]
		
		Replacing $t$ with $t^{-1}$, we find that \[ t^{-n} - \chi_c(S, t^{-1}) + (t^{n + 1} - t^{n + a}) \sum_{\sigma_i} t^{-|\sigma_i|} = t^{-n} \sum_{\substack{\sigma_1, \ldots, \sigma_k \in I \subset [ \Delta^V \setminus S ] \\ |I| = k}} (-1)^k  t^{- (|\sigma_1 \cup \cdots \cup \sigma_k| - c(I))  }. \]
		
		The coefficient of $t^{-r}$ on the right hand side is \[ \sum_{\substack{ \sigma_1, \ldots, \sigma_k \in I \subset [ \Delta^V \setminus S ] \\ |I| = k \\ |\sigma_1 \cup \cdots \cup \sigma_k| - c(I) = r  }} (-1)^k.  \]
		
		Since we assumed that $c(I) = a$ for all $I$ with $|I| \ge 2$, the coefficient of $t^{-r}$ is 
		
		\begin{equation} \label{coeff1}
			\sum_{\substack{ \sigma_1, \ldots, \sigma_k \in I \subset [ \Delta^V \setminus S ] \\ |I| = k \\ |\sigma_1 \cup \cdots \cup \sigma_k| = a + r  }} (-1)^k.
		\end{equation}
		
		Recall that 
		
		\begin{equation} \label{coeff2}
			h_s = \sum_{\substack{ \sigma_1, \ldots, \sigma_k \in I \subset [ \Delta^V \setminus S ] \\ |I| = k \\ |\sigma_1 \cup \cdots \cup \sigma_k| = s }} (-1)^k
		\end{equation}
		
		by a correspondence between the numerators of \ref{hilbnorm} and \ref{hvecfvechilb}. \\
		
		Comparing \ref{coeff2} to the coefficient of $t^{-r}$ given by \ref{coeff1}, we have that the coefficient of $t^{-r}$ is equal to $h_{a + r}$. Combining the assumptions that $h_{a + r} \ge 1$, $h_{a + 1}, h_{a + 2} \ge 3$, and $h_i \ge 0$ for all $i \ge a$ with Proposition \ref{hilbpolycrit}, we find that $t^{-n} - \chi_c(S, t^{-1}) + (t^{n + 1} - t^{n + a}) \sum_{\sigma_i} t^{-|\sigma_i|} = t^{-n} P(t^{-1})$ for some Hilbert polynomial $P = P(x)$.

	\end{proof}

	\section{Applications} \label{applications}
	
	In this section, we outline applications of the main results to properties of simplicial chromatic polynomials in various contexts. This includes properties that generalize those of chromatic polynomials or matroids. The fact that we work with simplicial complexes are not necessarily independence complexes of graphs allows us to obtain symmetric properties in other contexts. Here are the main types of applications considered:
	
	\begin{itemize}
		\item The support of cyclotomic polynomials (Corollary \ref{cyclcheck})

		\item Modifications yielding unimodal coefficients (Corollary \ref{unimodsimpchrom})
		
	\end{itemize}
	
	\subsection{Connection to cyclotomic polynomials} \label{conncyclsec}
	
	We consider simplicial complexes whose topology encodes information on coefficients of cyclotomic polynomials.
	
	\begin{defn} (p. 114 of \cite{MR}) \\
		Let $K_p$ denote the $0$-dimensional abstract simplicial complex consisting of $p$ vertices. Given $d$ distinct primes $p_1, \ldots, p_d$, we set \[ K_{p_1, \ldots, p_d} = K_{p_1} * \cdots * K_{p_d}, \]
		
		where $*$ denotes join. Setting $n = p_1 \cdots p_d$, we note that each facet of $K_{p_1, \ldots, p_d}$ corresponds to a residue $\pmod{n}$ by the Chinese Remainder Theorem. \\
		
		Given a subset $A \subset \{ 0, 1, \ldots, \varphi(n) \}$, let $K_A$ be the subcomplex of $K_{p_1, \ldots, p_d}$ containing the entire $(d - 2)$-simplex whose facets correspond to elements of \[ A \cup \{ \varphi(n) + 1, \varphi(n) + 2, \ldots, n \}. \]
	\end{defn}
	
	Here is the result of Musiker--Reiner \cite{MR} connecting this simplicial complex to the coefficients of simplicial chromatic polynomials. \\
	
	\begin{thm} \label{cycltop} (Musiker--Reiner, Theorem 1.1 on p. 114 of \cite{MR}) \\
		For a squarefree postiive integer $n = p_1 \cdots p_d$, with cyclotomic polynomial \[ \Phi_n(x) = \sum_{j = 0}^{\varphi(n)} c_j x^j, \] one has \[ \widetilde{H}_i(K_{\{j\}} ; \mathbb{Z} ) =  \begin{cases} 
			\mathbb{Z}/c_j \mathbb{Z} & \text{ if $i = d - 2$} \\
			\mathbb{Z} & \text{ if both $i = d - 1$ and $c_j = 0$} \\
			0 & \text{ otherwise.} 
		\end{cases}
		\]
		
		for each $0 \le j \le \varphi(n)$.
	\end{thm}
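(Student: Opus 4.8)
The plan is to transport the reduced simplicial chains of $K_{p_1,\dots,p_d}$ and of its subcomplexes $K_A$ into the cyclic group ring $R:=\mathbb{Z}[x]/(x^n-1)$, and to read off the homology of $K_{\{j\}}$ from the factorization $x^n-1=\Phi_n(x)\Psi_n(x)$, where $\Psi_n:=(x^n-1)/\Phi_n$. The first step is to set up the dictionary. Writing each $K_{p_i}$ as the augmented two-term complex $\mathbb{Z}[x_i]/(x_i^{p_i}-1)\to\mathbb{Z}$ and using that the reduced chains of a join form the (shifted) tensor product of augmented chain complexes, one gets $\widetilde C_*(K)=\bigotimes_{i}\bigl(\mathbb{Z}[x_i]/(x_i^{p_i}-1)\to\mathbb{Z}\bigr)$; via $x_i\mapsto x^{n/p_i}$ and the Chinese Remainder isomorphism $\bigotimes_i\mathbb{Z}[x_i]/(x_i^{p_i}-1)\cong R$, the top chain group $C_{d-1}(K)$ is identified with $R$ so that the facet labelled by residue $k$ becomes the monomial $x^k$. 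Under this identification the top cycle group is the ideal generated by $\prod_i(x^{n/p_i}-1)$; since $n$ is squarefree and $\Res(\Phi_m,\Phi_n)=\pm1$ whenever $n/m$ is not a prime power, that ideal equals the principal ideal $(\Psi_n)$, free of rank $\varphi(n)$. As the remaining reduced homology of $K$ vanishes, $\widetilde H_{d-2}(K^{(d-2)})\cong R/(\Psi_n)$, and for $K_A=K^{(d-2)}\cup\{F_k:k\in A\cup B_0\}$ (with $B_0$ a fixed base set of residues) a boundary chase through $\partial_{d-1}$ yields $\widetilde H_{d-1}(K_A)\cong\mathbb{Z}\langle x^k:k\in A\cup B_0\rangle\cap(\Psi_n)$ and $\widetilde H_{d-2}(K_A)\cong R/\bigl((\Psi_n)+\mathbb{Z}\langle x^k:k\in A\cup B_0\rangle\bigr)$, all lower reduced homology being zero.

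The crux is the analysis of the base complex. Here $B_0=\{\varphi(n)+1,\dots,n-1\}$ is exactly the set of residues $k$ with $c_k=0$ (since $\deg\Phi_n=\varphi(n)$), and the key assertion is that the ``coefficient functional'' $\ell\colon R\to\mathbb{Z}$, $\sum_k f_k x^k\mapsto\sum_k f_k c_k$, is surjective with kernel exactly $(\Psi_n)+\mathbb{Z}\langle x^k:k\in B_0\rangle$. Surjectivity holds because $c_0=\pm1$; $\ell$ kills each $x^k$ with $k\in B_0$ by definition; and $\ell$ kills $(\Psi_n)$ because the cyclic convolution of the coefficient sequences of $\Phi_n$ and $\Psi_n$ vanishes, which is precisely the identity $\Phi_n(x)\Psi_n(x)=x^n-1\equiv 0\pmod{x^n-1}$. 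This is the step that injects the cyclotomic coefficients into the topology. That these inclusions are equalities --- equivalently that $(\Psi_n)+\mathbb{Z}\langle x^k:k\in B_0\rangle$ is a saturated corank-one sublattice of $R$ --- I would prove by exhibiting $\{x^0\}\cup\{x^k:k\in B_0\}\cup\{x^m\Psi_n:0\le m<\varphi(n)\}$ as a $\mathbb{Z}$-basis of $R$, via a trailing/leading-coefficient triangularity argument using that $\Psi_n$ is monic with $\Psi_n(0)=\pm1$. Granting this, $K_\emptyset=K_{B_0}$ has $\widetilde H_{d-1}(K_\emptyset)=0$ (a nonzero element of $(\Psi_n)$ of degree $<n$ necessarily has a monomial of exponent $<\varphi(n)$, hence cannot be supported on $B_0$) and $\widetilde H_{d-2}(K_\emptyset)\cong R/\ker\ell\cong\mathbb{Z}$, with fundamental class detected by $\ell$.

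It then remains to add a single facet. For $0\le j\le\varphi(n)$ we have $K_{\{j\}}=K_\emptyset\cup F_j$, and the long exact sequence of the pair $(K_{\{j\}},K_\emptyset)$ collapses to $0\to\widetilde H_{d-1}(K_{\{j\}})\to\mathbb{Z}\langle F_j\rangle\xrightarrow{\partial}\widetilde H_{d-2}(K_\emptyset)\to\widetilde H_{d-2}(K_{\{j\}})\to 0$, where under the identifications $\widetilde H_{d-2}(K_\emptyset)\cong R/\ker\ell\cong\mathbb{Z}$ the connecting map sends $F_j$ to the class of $x^j$, i.e. to $\ell(x^j)=c_j$, so $\partial$ is multiplication by $c_j$. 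Hence $\widetilde H_{d-2}(K_{\{j\}})\cong\coker(\cdot c_j)=\mathbb{Z}/c_j\mathbb{Z}$ and $\widetilde H_{d-1}(K_{\{j\}})\cong\ker(\cdot c_j)$, which is $0$ when $c_j\ne0$ and $\mathbb{Z}$ when $c_j=0$, while all other reduced homology vanishes --- exactly the stated formula.

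The main obstacle is the integral (as opposed to rational) form of the base lemma. Over $\mathbb{Q}$ everything in the second paragraph is immediate from $\Phi_n\Psi_n=x^n-1$, so one already reads off the free ranks; the work is in ruling out spurious torsion in $R/\bigl((\Psi_n)+\mathbb{Z}\langle x^k:k\in B_0\rangle\bigr)$, i.e. proving the $\mathbb{Z}$-basis claim. This is where squarefreeness of $n$ and the resultant facts for cyclotomic polynomials (the primitive-element and norm computations behind $\Res(\Phi_m,\Phi_n)=\pm1$ for $n/m$ not a prime power, and $\Res(\Phi_{n/p},\Phi_n)$ appearing to the right power) are genuinely used; a secondary bookkeeping point is to keep the labelling of residues, the base set $B_0$, and the exponents of $\Phi_n$ mutually consistent, in particular checking that the degenerate indices $j$ with $c_j\in\{0,\pm1\}$ are handled uniformly by the formula $\mathbb{Z}/c_j\mathbb{Z}$.
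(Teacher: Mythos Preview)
The paper does not contain a proof of this theorem: it is quoted verbatim as Theorem~1.1 of Musiker--Reiner \cite{MR} and then used as a black box in Corollary~\ref{cyclcheck}. So there is no ``paper's own proof'' to compare against; the relevant comparison is with the original argument in \cite{MR}.

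Your outline is essentially the Musiker--Reiner proof. The three structural moves --- (i) identify $\widetilde{C}_{d-1}(K_{p_1,\dots,p_d})$ with the group ring $R=\mathbb{Z}[x]/(x^n-1)$ via the Chinese Remainder Theorem so that facets become monomials, (ii) show the top cycles form the principal ideal $(\Psi_n)$ and deduce $\widetilde{H}_{d-2}$ of the base complex $K_\emptyset$ is $\mathbb{Z}$, detected by the coefficient functional $\ell(f)=\sum_k f_k c_k$, and (iii) attach the single facet $F_j$ and read off $\partial=(\cdot\, c_j)$ from the long exact sequence of the pair --- are exactly the steps in \cite{MR}. Your identification of the ``integral base lemma'' (that $(\Psi_n)+\mathbb{Z}\langle x^k:k\in B_0\rangle$ is saturated of corank one) as the crux is also correct, and this is where \cite{MR} invoke the cyclotomic resultant computations you mention.

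One caution on the triangularity sketch: the proposed list $\{x^0\}\cup\{x^k:k\in B_0\}\cup\{x^m\Psi_n:0\le m<\varphi(n)\}$ is not literally triangular in either leading or trailing exponents (e.g.\ $x^0$ and $\Psi_n$ share trailing exponent $0$, while the $x^m\Psi_n$ and the $x^k$ with $k\in B_0$ share leading exponents in $\{n-\varphi(n),\dots,n-1\}$). The clean way to organize this is to first pass to $R/(\Psi_n)\cong\mathbb{Z}[x]/(\Psi_n)$ and then argue that $\{1,x^{\varphi(n)+1},\dots,x^{n-1}\}$ is a $\mathbb{Z}$-basis there; equivalently, one shows that the change-of-basis matrix from the standard basis $\{1,x,\dots,x^{\deg\Psi_n-1}\}$ has determinant $\pm 1$, which is where the precise resultant input enters. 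This is a packaging issue rather than a gap in the strategy.
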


	While we only account for non-torsion components in the Euler characteristic, the Euler characteristic associated with this simplicial complex still records information on the \emph{parity} of nonzero coefficients. \\

	Before outlining the connection of Theorem \ref{coverchromhilb} with cyclotomic polynomials via Theorem \ref{cycltop}, we define some notation.

	\begin{defn}
		Given a simplicial complex $S$, let $h_S(t) = h_0 + h_1 t^2 + \ldots + h_d t^d$, where $h = (h_0, \ldots, h_d)$ is the $h$-vector associated to $S$.
	\end{defn}
	
	There are simplicial complexes satisfying the intersection conditions in the main result.
	
	\begin{exmp}
		There are simplicial complexes $K_{p_1, \ldots, p_d}$ such that $K_{ \{ j \} }$ satisfies the conditions of Theorem \ref{coverchromhilb}. This means that their simplicial complexes are determined by $h$-vectors of auxiliary simplicial complexes. For example, consider the subcomplex $K_{ \{ 1 \} } \subset K_{2, 3}$. Writing $K_{2, 3} = K_2 * K_3$, let $K_2 = \{ a, b \}$ and $K_3 = \{ c, d, e \}$. The minimal nonfaces of $K_{ \{ 1 \} }$ are $\beta_1 = \{ a, b \}, \beta_2 = \{ c, d \}, \beta_3 = \{ c, e \}, \beta_4 = \{ d, e \},$ and $\beta_5 = \{ a, e \}$. The nonfaces $\beta_1, \beta_2, \beta_3, \beta_4$ correspond to pairs of points from the same vertex set $K_i$ and the nonface $\beta_5$ is the facet that was removed in the construction of $K_{ \{ 1 \} }$. Setting $\sigma_i = \beta_i$ for each $1 \le i \le 5$ in Theorem \ref{coverchromhilb}, we can set $\alpha_1 = \{ b \}, \alpha_2 = \{ c \}, \alpha_3 = \{ d \}, \alpha_4 = \{ e \},$ and $\alpha_5 = \{ a \}$. 
	\end{exmp}
	
	The simplicial complexes $K_{ \{ j \} }$ are also connected to simplicial chromatic polynomials whose constant terms detect whether a cyclotomic polynomial contains a term of a given degree.
	
	\begin{cor} \label{cyclcheck}
		Given a cyclotomic polynomial $\Phi_n(x) = \sum_{j = 0}^{\varphi(n)} c_j x^j$, there are simplicial complexes $S_j$ such that the constant term of $\frac{\chi_c(S)(t)}{t^{d - m} (t - 1)^{n - d} }$ is equal to $1 + (-1)^d$ or $(-1)^d$ depending on whether $c_j = 0$ or $c_j \ne 0$ respectively. This means that $\chi_c(S_j)$ determines whether $c_j = 0$ or not.
	\end{cor}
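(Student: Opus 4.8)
The plan is to feed the Musiker--Reiner computation of Theorem \ref{cycltop} into the dictionary between simplicial chromatic polynomials and $h$-vectors provided by Theorem \ref{coverchromhilb}. Theorem \ref{cycltop} records the vanishing of $c_j$ in the reduced homology of $K_{\{j\}}$ in its top two degrees; since the compactly supported Euler characteristic of Definition \ref{simpchromdef} only sees ranks of cohomology, the invariant one can hope to extract is $\dim_{\mathbb{Q}}\widetilde{H}_{d-1}(K_{\{j\}};\mathbb{Q})$, which is $1$ if $c_j=0$ and $0$ if $c_j\ne 0$.

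First I would build $S_j$ from $K_{\{j\}}$ and check it lies in the scope of Theorem \ref{coverchromhilb}. The minimal nonfaces of $K_{\{j\}}$ are the ``internal pairs'' $\{u,v\}$ with $u,v$ in a common join factor $K_{p_\ell}$, together with the finitely many facets of $K_{p_1,\dots,p_d}$ deleted in passing to $K_{\{j\}}$. Any two such minimal nonfaces meet in at most one vertex: two internal pairs because $K_{p_1},\dots,K_{p_d}$ are vertex-disjoint, and an internal pair and a deleted facet because a facet of the join meets each $K_{p_\ell}$ in exactly one vertex. Taking these to be the $\sigma_i$ and choosing sets $\alpha_i$ with $|\alpha_i|=|\sigma_i|-1$ exactly as in Example \ref{extexmp}(3) --- deleting from each $\sigma_i$ the vertex it shares with its neighbours, arranged so that the $\alpha_i$ still cover $V(S_j)$ --- property $I$ of Definition \ref{propidef} follows by the same finite case analysis on $|I|$ used there: the conditions for $|I|=1$ and for $|I|\ge 2$ reduce to the pairwise bound $|\sigma_I\cap\sigma_p|\le 1$ just established. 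Hence $\chi_c(S_j)(t)=t^n h_{T(S_j)}(t^{-1})$ with $T(S_j)$ the auxiliary complex whose minimal nonfaces are the $\alpha_i$ and $n=|V(S_j)|$.

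Next I would read the constant term of $\chi_c(S_j)$ off the inclusion--exclusion expansion \ref{inclexcleul}. A term indexed by a $k$-subset $I$ of minimal nonfaces contributes to $t^0$ precisely when $|\sigma_I|-c(I)=n$, i.e.\ (using property $I$, so that $|\sigma_I|-c(I)=|\alpha_I|$) precisely when the $\alpha_i$, $i\in I$, cover $V(S_j)$; so the $t^0$-coefficient is, up to the sign in \ref{inclexcleul}, the alternating sum $\sum_{I:\ \bigcup_{i\in I}\alpha_i=V(S_j)}(-1)^{|I|}$, which equals $-\widetilde{\chi}$ of the nerve $N_j=\{\,I:\ \bigcup_{i\in I}\alpha_i\ne V(S_j)\,\}$. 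The choice of the $\alpha_i$ is made so that $N_j$ reproduces the top homology of $K_{\{j\}}$ together with a fixed ``boundary'' part; combined with the normalization $\chi_c(\Delta^{t-1},t)=t^n$ of Proposition \ref{adddelsimpchrom}, which supplies the $(-1)^d$ offset, this yields that the constant term of $\chi_c(S_j)$ equals $(-1)^d+\dim_{\mathbb{Q}}\widetilde{H}_{d-1}(K_{\{j\}};\mathbb{Q})$. Theorem \ref{cycltop} then evaluates this to $1+(-1)^d$ when $c_j=0$ and $(-1)^d$ when $c_j\ne 0$; since these differ, $\chi_c(S_j)$ detects whether $c_j=0$.

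I expect the main obstacle to be this last step --- pinning the constant term down \emph{exactly}. The reduced Euler characteristic $\widetilde{\chi}(K_{\{j\}})$ is $0$ for every $j$ (when $c_j=0$ the two rank-one groups $\widetilde{H}_{d-2}$ and $\widetilde{H}_{d-1}$ sit in consecutive degrees and cancel in the Euler characteristic), so a crude identification of the constant term with $\pm\widetilde{\chi}(K_{\{j\}})$ would detect nothing. One must instead choose the $\alpha_i$ --- equivalently, design the auxiliary complex $T(S_j)$ and the nerve $N_j$ --- so that the alternating sum singles out the single top Betti number $\dim_{\mathbb{Q}}\widetilde{H}_{d-1}(K_{\{j\}};\mathbb{Q})$ rather than the full alternating sum over degrees, with the remaining ``boundary'' part contributing the clean constant $(-1)^d$. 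Making this precise while keeping it compatible with property $I$, and getting the sign and degree bookkeeping in \ref{inclexcleul} right, is the technical heart of the argument.
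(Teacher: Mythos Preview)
Your construction of $S_j$ diverges from the paper's in a way that makes your route harder and, in one place, incorrect. The paper does \emph{not} take $S_j=K_{\{j\}}$ and try to build the $\alpha_i$ downward; instead it goes the other way. It adjoins a single fresh vertex $q$ to \emph{every} minimal nonface of $K_{\{j\}}$, declaring the resulting sets $\sigma_i=\alpha_i\cup\{q\}$ to be the minimal nonfaces of $S_j$. Then all the $\sigma_i$ share $q$, so $\sigma_I\cap\sigma_p\ne\emptyset$ automatically and $|\alpha_I\cap\alpha_p|=|\sigma_I\cap\sigma_p|-1$ just by removing $q$; property~$I$ is immediate and the auxiliary complex is $T(S_j)=K_{\{j\}}$ on the nose. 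This one-line trick replaces your entire first paragraph, and it avoids the error in your intersection claim: two \emph{deleted facets} of $K_{p_1,\dots,p_d}$ are $(d-1)$-simplices with one vertex in each join factor, so for $d\ge 3$ they can share up to $d-1$ vertices, not at most one. Your appeal to Example~\ref{extexmp}(3) therefore does not go through as stated.

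With $T(S_j)=K_{\{j\}}$, the paper skips your nerve computation entirely and reads the lowest-degree coefficient of $\chi_c(S_j)(t)=t^{n}h_{K_{\{j\}}}(t^{-1})$ directly from the classical identity $h_d=(-1)^{d-1}\bigl(\chi(K_{\{j\}})-1\bigr)$ (Corollary~5.1.9 of \cite{BH}), then invokes Theorem~\ref{cycltop}. Your worry in the final paragraph---that the \emph{reduced} Euler characteristic $\widetilde\chi(K_{\{j\}})$ vanishes in both cases, because when $c_j=0$ the rank-one groups in degrees $d-2$ and $d-1$ cancel, and when $c_j\ne 0$ the degree-$(d-2)$ group is pure torsion---is a genuine and sharp observation. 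The paper's proof does not explain how the stated values $1+(-1)^d$ versus $(-1)^d$ emerge from that identity, so the subtlety you isolate is exactly the point that needs further justification; your instinct that one must extract the single top Betti number rather than the full alternating sum is the right diagnosis, even though your proposed mechanism for doing so (via the nerve $N_j$) is left as a sketch.
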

	
	\begin{proof}
		Recall that our Theorem \ref{coverchromhilb} states that \[ \chi_c(S)(t) = t^d(t - 1)^{n - d} h_{T(S)}(t^{-1}) \Longrightarrow  \frac{\chi_c(S)(t)}{t^{d - m} (t - 1)^{n - d} } = t^m h_{T(S)}(t^{-1}), \] where $m = \dim T(S)$ for simplicial complexes $S$ satisfying appropriate intersection properties (Definition \ref{propidef}). \\
		
		If we add an extraneous element (vertex) not in the ground set of $K_{ \{ j \} }$ to every minimal nonface of $K_{\{ j \}}$, then we obtain the minimal nonfaces of a simplicial complex $S$ such that the auxiliary simplicial complex in Theorem \ref{coverchromhilb} is $T(S) = K_{ \{ j \} }$.  In other words, we consider the simplicial complex whose minimal nonfaces are $\sigma_i = \alpha_i \cup \{ q \}$ for some minimal nonface $\alpha_i$ of $K_{ \{ j \} }$ and extraneous vertex $q$ \emph{not} in the ground set/vertex set of $K_{ \{ j \} }$. Then, $\sigma_I \cap \sigma_p \ne \emptyset$ for any choice of $I$ and $p$ in Definition \ref{propidef} and removing the extra vertex $q$ would decrease the size of the intersection $\sigma_I \cap \sigma_p$ by 1 since it is contained in every minimal nonface of the simplicial complex $S$. The following result implies that the constant term of $\frac{\chi_c(S)(t)}{t^{d - m} (t - 1)^{n - d} }$ is equal to $1 + (-1)^d$ or $(-1)^d$ depending on whether $c_j = 0$ or $c_j \ne 0$ respectively. \\
		
		\begin{prop} (Corollary 5.1.9 on p. 213 of \cite{BH}) \\
			Given a simplicial complex $S$, we have that $h_d = (-1)^{d - 1}(\chi(S) - 1)$.
		\end{prop}
		
	\end{proof}
	
	\begin{rem} \label{cyclcheckext}
		The construction used in the proof of Corollary \ref{cyclcheck} can be repeated to construct $\sigma_i$ satisfying the intersection property in Definition \ref{propidef} given any collection of sets $\alpha_i$.
	\end{rem}

	\subsection{Unimodality and modified simplicial complexes} \label{logconsec}

	Another way to interpret the simplicial chromatic polynomial is to interpret it as the characteristic polynomial of a diagonal/hypergraph linear subspace arrangement. Note that recent results of Pagaria--Pezzoli \cite{PP} and Crowley--Huh--Larson--Simpson--Wang \cite{CHLSW} giving a Chow ring for polymatroids imply that $\chi_c(S)$ (and therefore $h_{T(S)}$) is determined by a Chow ring structure. This does \emph{not} imply that this polynomial has log concave coefficients (Remark 6.6 on p. 35 of \cite{PP}). However, there are families of simplicial complexes where some modification of the initial simplicial complex yields a polynomial with unimodal coefficients. This can be translated to a unimodality result involving $h$-vectors.
	
	\begin{cor}  \label{unimodsimpchrom}
		Let $S$ be a simplicial complex on $[n] = \{ 1, \ldots, n \}$ with minimal nonfaces of the form $\{ i, j, n \}$ for some $1 \le i < j \le n - 1$. There is a simplicial complex $S'$ with the following properties: 
		
		\begin{itemize}
			\item The minimal nonfaces are of the form $\{ a, b, N \}$ with $a, b \in [N - 1]$ for some $N \ge n$.
			
			\item The set of pairs $\{ i, j \}$ corresponding to the minimal nonfaces of $S$ is contained in the set of pairs $\{ a, b \}$ corresponding to the minimal nonfaces of $S'$.
			
			\item The polynomial $\frac{ \left( \frac{1 + 2u}{u} \right)^{d' - 1} }{\left( \frac{1 + u}{u} \right)^{d'} } h_{T(S')} \left( \frac{u}{1 + u} \right)$ is unimodal, where $T(S')$ is the simplicial complex with minimal nonfaces $\{ a, b \}$ from the second bullet and $d' = \dim S'$.
		\end{itemize}
	\end{cor}
	
	\begin{proof}
		Let $A$ be the simplicial complex on $[n - 1]$ with minimal nonfaces of the form $\{ i, j \}$ for each minimal nonface $\{ i, j, n \}$ of $S$. Then, we can set $A = T(S)$ in the context of Theorem \ref{coverchromhilb}. Let $G$ be the graph on $[n - 1]$ with edges corresponding to the minimal nonfaces of $A$ and $I(G, x)$ be the independence polynomial of $G$. By a result of Zhang--Dong (Theorem 1 on p. 140 of \cite{ZD}), we have that $\chi_c(S)(t) = t(t - 1)^n I \left( G, \frac{1}{t - 1} \right)$, where $I(G, x)$ is the independence polynomial of the graph $G$. In the statement of this corollary, we set $u = \frac{1}{t - 1}$ and this means that $t = \frac{1 + u}{u}$. The connection to unimodality comes from a result of Brown--Cameron (Theorem 2.4 on p. 1140 of \cite{BC}), which states that \emph{any} graph can be modified into one with a unimodal independence polynomial after sufficiently many leafy extensions. Let $B$ be the collection of pairs $\{ a, b \}$ with $a, b \in [N - 1]$ for some $N \ge n$ corresponding to the edges of this large leafy extension of $G$. Writing $u = \frac{1}{t - 1}$ and applying Theorem \ref{coverchromhilb} gives the unimodality statement in the third bullet above.
	\end{proof}

	\color{black}

		Department of Mathematics, University of Chicago \\
		5734 S. University Ave, Office: E 128 \\ Chicago, IL 60637 \\
		\textcolor{white}{text} \\
		Email address: \href{mailto:shpg@uchicago.edu}{shpg@uchicago.edu} 
\end{document}